\newtheorem{theorem}{Theorem}
\newtheorem{definition}[theorem]{Definition}
\newtheorem{lemma}[theorem]{Lemma}
\newtheorem*{2D}{2D Conjecture}
\newtheorem{proposition}[theorem]{Proposition}
\begin{document}

\title{2D problems in groups} 

\author{Aditi Kar \and Nikolay Nikolov}
\begin{abstract}
We investigate a conjecture about stabilisation of deficiency in finite index subgroups and relate it to the D2 Problem of C.T.C. Wall and the Relation Gap problem. We verify the pro-$p$ version of the conjecture, as well as its higher dimensional abstract analogues.
\end{abstract}
\maketitle

Given a finitely presented group $G$, the deficiency $\delta(G)$ of $G$ is defined as the maximum of $|X|-|R|$ over all presentations $G=\langle X \ | \ R \rangle$. We related deficiency of a group with 2-dimensionality in \cite{KN} and proposed the following conjecture. 

\begin{2D}[\cite{KN}] \label{stabilisation} Let $G$ be a residually finite finitely presented group such that $\delta (H)-1= [G:H]( \delta(G)-1)$ for every subgroup $H$ of finite index in $G$. Then $G$ has a finite $2$-dimensional classifying space $K(G,1)$.
\end{2D} 

In this paper, we relate the above conjecture with two well-known problems in topological group theory: \emph{Wall's D2 problem} and the \emph{Relation Gap problem}. The main purpose of the paper is to explain the implications  

\[ \textrm{affirmative D2 problem} \Rightarrow \textrm{no relation gap} \Rightarrow \textrm{2D conjecture} \]  

\section{Background}
Let $G$ be a finitely presented group. Set $d(G)$ to be the cardinality of a minimal generating set of $G$.

We denote by $b_i(G)=\dim_\mathbb Q H_i(G,\mathbb Q)$ and note that $\delta(G) \leq b_1(G) \leq d(G)$. 
Starting with a presentation $\langle X|R\rangle$ for $G$, one obtains a Schreier presentation for $H$ with $[G:H](|X|-1)+1$ generators and 
$[G:H]|R|$ relations showing that
\[ \delta(H)-1 \geq [G:H](\delta(G)-1). \]
We are interested in the situation when the above inequality is in fact equality for every finite index subgroup $H$ of $G$.

  We next introduce the invariant $\mu_n(G)$ of Swan \cite{swan}. 
  Let $n \in \mathbb N$. A partial free resolution of $\mathbb Z$ of length $n$ is an exact sequence  
 
 \begin{equation} \label{freeres} \mathcal{F}:\quad  (\mathbb Z G)^{f_n} \rightarrow (\mathbb Z G)^{f_{n-1}} \rightarrow \cdots \rightarrow (\mathbb Z G)^{f_0} \rightarrow \mathbb{Z}\rightarrow 0\end{equation}
 
 and we define $\mu_n(\mathcal{F})=\sum_{i=0}^{n} (-1)^{n-i}f_i$.
 
 Recall the well-known Morse inequalities.
 \begin{proposition} \label{morse}
 	Let $n \in \mathbb N$ and $\mathcal F$ be a partial free resolution (\ref{freeres}) as above. Then \[\sum_{i=0}^n (-1)^{n-i} b_i(G) \leq \mu_n(\mathcal F).\]
 \end{proposition}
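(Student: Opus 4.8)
The plan is to reduce the statement to an elementary fact about bounded complexes of finite-dimensional $\mathbb{Q}$-vector spaces, namely that the alternating sum of dimensions of the terms equals the alternating sum of dimensions of the homology. First I would forget the augmentation $(\mathbb{Z}G)^{f_0}\to\mathbb{Z}$ and view $\mathcal{F}$ as a chain complex $C_\bullet$ of finitely generated free $\mathbb{Z}G$-modules concentrated in degrees $0,\dots,n$, with $C_i=(\mathbb{Z}G)^{f_i}$. Exactness of (\ref{freeres}) gives $H_0(C_\bullet)\cong\mathbb{Z}$ as trivial $\mathbb{Z}G$-module, $H_i(C_\bullet)=0$ for $1\le i\le n-1$, and $H_n(C_\bullet)=\ker\partial_n$ (no injectivity of the leftmost map is assumed, so this may be nonzero). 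Splicing any free $\mathbb{Z}G$-resolution of $\ker\partial_n$ onto $C_\bullet$ above degree $n$ yields a free resolution $P_\bullet\to\mathbb{Z}$ with $P_i=C_i$ for every $i\le n$.

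Next I would apply $-\otimes_{\mathbb{Z}G}\mathbb{Q}$, with $\mathbb{Q}$ the trivial module, obtaining a complex $D_\bullet=C_\bullet\otimes_{\mathbb{Z}G}\mathbb{Q}$ of $\mathbb{Q}$-vector spaces with $\dim_\mathbb{Q}D_i=f_i$, concentrated in degrees $0,\dots,n$. By definition $H_i(P_\bullet\otimes_{\mathbb{Z}G}\mathbb{Q})=H_i(G,\mathbb{Q})$, so this space has dimension $b_i(G)$. Because the $i$-th homology of a chain complex depends only on its terms and differentials in degrees $i-1,i,i+1$, and $D_\bullet$ agrees with $P_\bullet\otimes_{\mathbb{Z}G}\mathbb{Q}$ in all degrees $\le n$, I get $H_i(D_\bullet)\cong H_i(G,\mathbb{Q})$ for $0\le i\le n-1$. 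In the top degree $D_\bullet$ vanishes in degree $n+1$, so $H_n(D_\bullet)=\ker(D_n\to D_{n-1})$, which surjects onto $H_n(G,\mathbb{Q})$ — the further quotient by the image of the degree-$(n+1)$ differential of $P_\bullet\otimes_{\mathbb{Z}G}\mathbb{Q}$; hence $\dim_\mathbb{Q}H_n(D_\bullet)\ge b_n(G)$.

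Finally I would use the identity $\sum_{i=0}^n(-1)^i f_i=\sum_{i=0}^n(-1)^i\dim_\mathbb{Q}H_i(D_\bullet)$, valid for any bounded complex of finite-dimensional vector spaces (repeated rank–nullity). Multiplying by $(-1)^n$ and inserting the identifications above gives $\mu_n(\mathcal F)=\sum_{i=0}^{n-1}(-1)^{n-i}b_i(G)+\dim_\mathbb{Q}H_n(D_\bullet)\ge\sum_{i=0}^n(-1)^{n-i}b_i(G)$, which is the assertion. The routine ingredients — right-exactness of $\otimes$ computing $H_0$, the splicing of a resolution, the Euler characteristic identity — present no difficulty; the one point that needs care, and the only place the length hypothesis is really used, is the degree-$n$ comparison: truncating a free resolution at level $n$ leaves the homology unchanged below degree $n$ but can only enlarge it in degree $n$, since the boundaries $\operatorname{im}(\partial_{n+1}\otimes\mathbb{Q})$ have been thrown away. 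I would double-check that the signs in the definition of $\mu_n$ make this enlargement push the inequality in the stated direction.
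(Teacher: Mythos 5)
Your argument is correct. The paper offers no proof of Proposition \ref{morse} (it simply recalls the Morse inequalities as well known), and what you give is the standard argument: splice a free resolution on top of the truncated complex, tensor with $\mathbb{Q}$, note that truncation leaves $H_i$ unchanged for $i<n$ and can only enlarge $H_n$, and apply the Euler characteristic identity. The sign check at the end is right: the degree-$n$ term enters $\mu_n$ with coefficient $(-1)^{n-n}=+1$, so $\dim_{\mathbb{Q}}H_n(D_\bullet)\geq b_n(G)$ pushes the inequality in the stated direction.
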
 
 R. Swan \cite{swan} defined the following invariant while studying free resolutions of modules of finite groups.
 \begin{definition} Let $n \in \mathbb N$.
 The invariant $\mu_n(G)$ is defined as the minimum of $\mu_n(\mathcal F)$ as $\mathcal F$ ranges over all partial free resolutions $\mathcal{F}$ of $\mathbb Z$. 
 \end{definition}

 Given a presentation of $G$ with $e_1$ generators and $e_2$ relations one has the partial free resolution 
 \begin{equation} \label{chain} (\mathbb Z G)^{e_2} \stackrel{\partial_2}{\longrightarrow} (\mathbb Z G)^{e_1} \stackrel{\partial_1}{\longrightarrow} \mathbb Z G \stackrel{\partial_0}{\longrightarrow} \mathbb Z \rightarrow 0
 \end{equation}
 arising as the cellular chain complex of the universal cover of the presentation complex of $G$.
 By taking a presentation which realizes the deficiency of $G$ we obtain $\mu_2(G) \leq 1-\delta(G)$. 
  The case $n=2$ of the Morse inequalities applied to (\ref{chain}), together with $b_0(G)=1$ gives the well-known inequality $\delta(G) \leq b_1(G)-b_2(G)$.

\subsection{Groups with two dimensional classifying spaces.}

The deficiency is easy to compute for groups which have finite two-dimensional classifying spaces. Examples of such groups are surface groups or more generally, torsion-free one relator groups and direct products of two free groups. 

\begin{lemma}\label{space} If a group $G$ has a finite two-dimensional space $K(G,1)$, then $\delta(G)=1 -\chi(G)$ and consequently, $\delta (H)-1= [G:H] (\delta (G)-1)$ for every subgroup $H$ of finite index in $G$.
\end{lemma}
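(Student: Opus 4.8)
The plan is to split the statement into two assertions: first the numerical formula $\delta(G)=1-\chi(G)$, and then the deduction that equality holds in the Schreier inequality for every finite index subgroup $H$.

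For the first part, I would use the finite two-dimensional $K(G,1)$ complex, call it $Y$, which has cells in dimensions $0,1,2$ only. Its $2$-skeleton is already a presentation complex for $G$ (since $\pi_1(Y)=G$ and there are no higher cells, so the $2$-cells give relators that normally generate), yielding a presentation with $e_1$ generators and $e_2$ relations where $e_1-e_2 = c_1 - c_2 = 1-\chi(Y) = 1-\chi(G)$ after accounting for the single $0$-cell (one may first collapse a maximal tree, or just note $c_0=1$ can be arranged and in general $e_1 - e_2 = -\chi(Y)$ when $c_0=1$, i.e. $e_1-e_2 = 1-\chi(G)$). Hence $\delta(G)\ge 1-\chi(G)$. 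For the reverse inequality, I invoke the case $n=2$ of the Morse inequalities applied to the chain complex (\ref{chain}) of any presentation realizing the deficiency, together with the fact that $Y$ being a $2$-dimensional $K(G,1)$ forces $H_i(G,\mathbb Q)=H_i(Y,\mathbb Q)$ with $H_i=0$ for $i>2$; then $\chi(G)=b_0(G)-b_1(G)+b_2(G)=1-b_1(G)+b_2(G)$, so $1-\chi(G)=b_1(G)-b_2(G)\ge \delta(G)$ by the inequality $\delta(G)\le b_1(G)-b_2(G)$ recalled in the Background. Combining the two bounds gives $\delta(G)=1-\chi(G)=b_1(G)-b_2(G)$.

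For the second part, the key observation is that a finite index subgroup $H\le G$ inherits a finite two-dimensional $K(H,1)$, namely the cover $Y_H$ of $Y$ of degree $[G:H]$, which is again a finite two-dimensional complex and a classifying space for $H$. Euler characteristic is multiplicative under finite covers, so $\chi(H)=[G:H]\chi(G)$. Applying the already-established formula to both $G$ and $H$ gives $\delta(H)-1 = -\chi(H) = -[G:H]\chi(G) = [G:H](\delta(G)-1)$, which is exactly the desired equality.

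I do not anticipate a serious obstacle here; the only mild care needed is the bookkeeping with the $0$-cell when passing between $\chi(Y)$ and the deficiency of the associated presentation (one should either reduce to a complex with a single vertex by collapsing a spanning tree of the $1$-skeleton, or track the constant term carefully), and the standard but worth-stating facts that a $2$-dimensional classifying space has its $2$-skeleton equal to a presentation complex and that finite covers of finite $K(\pi,1)$'s are again finite $K(\pi,1)$'s. Everything else is a direct assembly of the Morse inequality $\delta\le b_1-b_2$ and multiplicativity of $\chi$.
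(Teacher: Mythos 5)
Your proof is correct and is evidently the argument the authors intend (the paper states Lemma \ref{space} without proof, but the two ingredients you use --- the inequality $\delta(G)\le b_1(G)-b_2(G)$ from the $n=2$ Morse inequality and the multiplicativity of $\chi$ under finite covers --- are exactly what the Background section sets up). The bookkeeping points you flag (collapsing a spanning tree to get a one-vertex presentation complex, and that a finite cover of a finite two-dimensional $K(G,1)$ is again one) are handled adequately, so there is no gap.
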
 

For example $\delta(F_n \times F_m) = -(n-1)(m-1)$ while the deficiency of a torsion-free one relator group defined on $d$ generators is $d-2$. 

The 2D Conjecture stated in the introduction proposes that the converse of Lemma \ref{space} holds. Note that its 1-dimensional analogue is true as shown by R. Strebel \cite{strebel} (see also \cite[Theorem 7]{AJN} for a different perspective).

\begin{proposition}[\cite{strebel}] \label{1dim} Let $G$ be a finitely generated residually finite group. Then $G$ is a free group if and only if $d(H)-1= |G:H|(d(G)-1)$ for every subgroup $H$ of finite index in $G$.
\end{proposition}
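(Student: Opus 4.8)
The plan is to prove the two directions separately. The forward implication is the easy Nielsen--Schreier computation: if $G$ is free of rank $n = d(G)$ and $H \le G$ has finite index $k$, then $H$ is free of rank exactly $k(n-1)+1$, so $d(H) = k(n-1)+1 = [G:H](d(G)-1)+1$. (For an arbitrary finitely generated $G$ one only has the inequality $d(H) - 1 \le [G:H](d(G)-1)$, obtained by rewriting a minimal presentation of $G$ via Reidemeister--Schreier; the content of the proposition is that forcing equality for \emph{all} finite-index $H$ forces freeness.)

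For the converse, put $d = d(G)$, fix an epimorphism $\pi\colon F \twoheadrightarrow G$ from the free group $F = F_d$ of rank $d$, and set $N = \ker \pi$; it suffices to show $N = 1$, for then $G \cong F_d$. If $H \le G$ has finite index then its preimage $\tilde H := \pi^{-1}(H)$ is a finite-index subgroup of $F$, hence free of rank exactly $r(H) := [G:H](d-1)+1$ by Nielsen--Schreier, and $\pi$ restricts to an epimorphism $\tilde H \twoheadrightarrow H$ with kernel $N$; thus $d(H) \le r(H)$, and the hypothesis asserts that equality holds for every finite-index $H$. The cases $G$ finite and $d(G) \le 1$ are immediate (if $G$ is finite, applying the hypothesis to $H = 1$ gives a contradiction unless $G$ is trivial; if $d(G) = 1$ then $\pi$ is an isomorphism), so assume $G$ is infinite and $d \ge 2$.

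Now suppose for contradiction that $N \ne 1$. The key reduction is: \emph{if for some finite-index $H \le G$ the subgroup $N$ contains an element $w$ that is primitive in the free group $\tilde H$}, then, writing $\tilde H = \langle w\rangle * \tilde H'$ with $\tilde H'$ free of rank $r(H) - 1$, the group $H = \tilde H/N$ is a quotient of $\tilde H/\langle\!\langle w\rangle\!\rangle \cong \tilde H' \cong F_{r(H)-1}$, so $d(H) \le r(H) - 1 < r(H)$, contradicting the hypothesis. Hence it is enough to produce one finite-index subgroup $\tilde H$ of $F$ with $N \subseteq \tilde H$ and with $N$ containing a primitive element of $\tilde H$. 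Marshall Hall's theorem provides \emph{part} of this: any nontrivial $w \in F$ is primitive in some finite-index subgroup $\tilde H_0 \le F$ (its infinite cyclic subgroup $\langle w\rangle$, generated by $w$, is a free factor of $\tilde H_0$). One should choose $w \in N$ carefully — a commutator of two freely independent elements of $N$ works (such elements exist because $N$ is non-cyclic, as $d \ge 2$), and in particular one checks that such a $w$ is never a proper power in $F$, so that it is represented by an embedded circle in any cover of the rose dominated by the one associated to $N$.

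The main obstacle is that the subgroup $\tilde H_0$ supplied by Marshall Hall's theorem need not contain $N$, and replacing it by $\tilde H_0 N$ (which does contain $N$) can destroy the primitivity of $w$. Reconciling the two requirements — a finite-index subgroup of $F$ that \emph{simultaneously} contains $N$ and has $w$ as a free-basis element — is the heart of the argument, and is where residual finiteness of $G$ enters essentially. I would carry it out using the separability (LERF) of free groups and Stallings' description of finite-index subgroups as finite covers of a rose: the aim is to exhibit such a finite cover, dominated by the (infinite) cover corresponding to $N$ so that $N \subseteq \tilde H$, in which the loop representing $w$ appears as an embedded circle that is a free factor of the fundamental group. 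Engineering this cover — combining Marshall Hall's completion construction with the residual finiteness of $G$ and some Nielsen-transformation bookkeeping — is the step I expect to be genuinely delicate.
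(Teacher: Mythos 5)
The paper does not actually prove Proposition \ref{1dim}; it quotes it from Strebel, so there is no in-paper argument to compare against, and your proposal has to stand on its own. It gets the easy direction right and sets up the correct overall reduction (exhibit a finite-index $\tilde H\le F$ with $N\le\tilde H$ in which some $1\ne w\in N$ is a free basis element, forcing $d(\tilde H/N)<\operatorname{rank}\tilde H$), but it contains a genuine gap and, worse, a key choice that cannot work. The witness you propose, a commutator $w=[u,v]$ of two elements $u,v\in N$, can never be primitive in any subgroup $\tilde H$ containing $N$: such a $w$ lies in $[\tilde H,\tilde H]$ and so vanishes in $\tilde H^{\mathrm{ab}}$, whereas a basis element of a free group survives as part of a basis of its abelianisation. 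For the same reason, your claim that such a $w$ ``is represented by an embedded circle in any cover dominated by the one associated to $N$'' already fails in that cover itself (the Cayley graph of $G$): since $u,v\in N$, the $w$-loop returns to the base vertex several times. So the plan as written is not merely incomplete; with that choice of $w$ it is provably unworkable.

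Moreover, the step you defer as ``genuinely delicate'' is the entire content of the theorem, and Marshall Hall's theorem / LERF is the wrong tool, for exactly the reason you identify: the subgroup it supplies need not contain $N$, and enlarging it destroys primitivity. The standard repair (essentially Strebel's) is to take $w$ to be a nontrivial element of $N$ of \emph{minimal word length}. Normality of $N$ shows that for prefixes $w_i,w_j$ of $w$ with $0\le i<j\le n$, the coincidence $Nw_i=Nw_j$ is equivalent to the subword $w_i^{-1}w_j$ lying in $N$, which minimality rules out unless $(i,j)=(0,n)$. Hence the proper prefixes of $w$ have pairwise distinct images in $G$, and residual finiteness of $G$ produces a finite-index normal subgroup $\tilde H\supseteq N$ separating them. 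In the Schreier graph of $F/\tilde H$ the loop spelled by $w$ at the base vertex is then an embedded circle, so a spanning tree containing all but one of its edges exhibits $w$ as a free basis element of $\tilde H$, and your own reduction concludes. Without this (or an equivalent) choice of $w$ and separation argument, the proposal does not amount to a proof.
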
 

Strebel proved Proposition \ref{1dim} as an answer to a question of Lubotzky and van den Dries \cite{LD}, who had shown that its analogue does not hold in the class of profinite groups. At the same time Lubotzky \cite[Proposition 4.2]{L} proved that the analogue of Proposition \ref{1dim} is true in the class of pro-$p$ groups. We will return to pro-$p$ groups in section \ref{p} below.


We remark that the 2D conjecture is closely connected with gradients in groups and their $L^2$ cohomology. The following basic result characterizes groups $G$ with two dimensional classifying spaces in terms of their $L^2$ Betti numbers $\beta_i(G)$.

\begin{lemma}[\cite{KN}] \label{criterion} Let $G$ be an infinite finitely presented group. Then $\delta (G)-1 \leq \beta_1(G) - \beta_2(G)$ with equality if and only if $G$ has a two dimensional classifying space.
\end{lemma}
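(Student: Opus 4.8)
The plan is to compute the rational and the $L^2$ Euler characteristics of a presentation complex that realises $\delta(G)$, and then to control its second $L^2$ Betti number. First, fix a presentation $\langle X\mid R\rangle$ of $G$ with $|X|-|R|=\delta(G)$ and let $Y$ be its presentation complex: a finite $2$-complex with a single $0$-cell, $\pi_1(Y)\cong G$, Euler characteristic $\chi(Y)=1-|X|+|R|=1-\delta(G)$, and cellular chain complex of its universal cover of the form (\ref{chain}) with $e_1=|X|$, $e_2=|R|$. Applying $\mathcal N(G)\otimes_{\mathbb{Z}G}-$ to this finite free complex and using additivity of the von Neumann dimension over a finite complex gives $\chi(Y)=\sum_i(-1)^i\beta_i(Y)$, where $\beta_i(Y):=\dim_{\mathcal N(G)}H_i\bigl(\mathcal N(G)\otimes_{\mathbb{Z}G}C_*(\widetilde Y)\bigr)$. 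Since $G$ is infinite, $\beta_0(Y)=\beta_0(G)=0$, so $1-\delta(G)=\beta_2(Y)-\beta_1(Y)$.

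Next I would compare $\beta_i(Y)$ with $\beta_i(G)$ for $i=1,2$. Extend (\ref{chain}) by one further free $\mathbb{Z}G$-module to make it exact at $(\mathbb{Z}G)^{e_2}$, i.e.\ to the start of a free resolution of $\mathbb Z$; computing $\operatorname{Tor}^{\mathbb{Z}G}_*(\mathcal N(G),\mathbb Z)$ from it, the first homology depends only on degrees $\le 2$, and hence $\beta_1(Y)=\beta_1(G)$, while in degree $2$ the image of the newly added boundary map is exactly $\pi_2(Y)=\ker\partial_2\subseteq(\mathbb{Z}G)^{e_2}$. Additivity of dimension then yields
\[\beta_2(Y)=\beta_2(G)+\dim_{\mathcal N(G)}\bigl(\mathcal N(G)\cdot\pi_2(Y)\bigr)\ \geq\ \beta_2(G),\]
where $\mathcal N(G)\cdot\pi_2(Y)$ denotes the $\mathcal N(G)$-submodule of $\mathcal N(G)^{e_2}$ generated by $\pi_2(Y)$. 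Combining with the first paragraph, $\delta(G)-1=\beta_1(G)-\beta_2(Y)\le\beta_1(G)-\beta_2(G)$, which is the asserted inequality, and equality holds precisely when $\dim_{\mathcal N(G)}\bigl(\mathcal N(G)\cdot\pi_2(Y)\bigr)=0$.

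It remains to see that this last dimension vanishes if and only if $\pi_2(Y)=0$. The nontrivial direction: a nonzero $v\in\pi_2(Y)\subseteq(\mathbb{Z}G)^{e_2}$ spans a cyclic submodule $\mathcal N(G)v$ whose closure in $\ell^2(G)^{e_2}$ is the range of a nonzero projection in $M_{e_2}(\mathcal N(G))$ acting standardly, hence of strictly positive trace, so $\dim_{\mathcal N(G)}\bigl(\mathcal N(G)\cdot\pi_2(Y)\bigr)\ge\dim_{\mathcal N(G)}(\mathcal N(G)v)>0$. Thus equality in the Lemma forces $\pi_2(Y)=H_2(\widetilde Y)=0$; being a simply connected $2$-complex with vanishing $H_2$, $\widetilde Y$ is then acyclic, hence contractible by Hurewicz and Whitehead, so $Y$ is a finite $2$-dimensional $K(G,1)$. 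Conversely, a finite $2$-dimensional $K(G,1)$ is, after collapsing a maximal tree, such a presentation complex with $\pi_2=0$, and by Lemma \ref{space} it realises $\delta(G)$; running the computation above then gives equality. The main obstacle is this positivity statement together with the two dimension-comparison identities: all three are instances of Lück's dimension theory for $\mathcal N(G)$-modules, and the positivity of $\dim_{\mathcal N(G)}(\mathcal N(G)v)$ for a nonzero $v$ in a free module is the point I expect to require the most care.
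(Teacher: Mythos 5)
The paper does not prove this lemma here --- it is quoted from \cite{KN} --- but your argument is the standard one behind it: compute the $L^2$ Euler characteristic of a deficiency-realising presentation complex, use $\beta_0=0$ for infinite $G$ and $\beta_1(Y)=\beta_1(G)$, and identify the defect $\beta_2(Y)-\beta_2(G)$ with the von Neumann dimension of the submodule generated by $\pi_2(Y)$, whose positivity for $\pi_2(Y)\neq 0$ follows from faithfulness of the trace. The proof is correct and matches the approach of \cite{KN}, so there is nothing to add.
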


In particular any counterexample to the 2D conjecture must be a group $G$ with \emph{deficiency gradient} strictly less than $\beta_1(G) - \beta_2(G)$, see \cite{KN} for more details on this connection.

\section{Wall's D2 Problem}

Wall's D2 problem is a generalisation of the Eilenberg Ganea Conjecture and belongs to the class of questions that explore links between homological and geometric dimensions. A finite CW-complex $X$ is said to be a D2 complex if it has cohomological dimension 2. 
The D2 Problem for a finitely presented group $G$ asks if every finite D2 complex with fundamental group $G$ is homotopy equivalent to a finite 2-complex. If the answer is affirmative we shall say that $G$ has the \emph{D2 property}. The problem was proposed by C.T.C. Wall in 1965 \cite{wall} and little is known about it except in the case when $G$ is finite, free or abelian, see \cite{johnson}.

The Eilenberg-Ganea Conjecture asks if every group of cohomological dimension 2 is of geometric dimension 2. Note that a group of cohomological dimension 2 does not necessarily have a finite classifying space, as famously shown by M. Bestvina and N. Brady \cite{BB}. However, if one assumes that a group $G$ of cohomological dimension 2 has a finite classifying space $X$, then $X$ is a D2 complex. If in addition $G$ has the D2 property, then $X$ is homotopy equivalent to a finite 2-complex. So, $G$ has geometric dimension two, as predicted by Eilenberg-Ganea.  

\section{The Relation Gap problem} 

Suppose that a finitely presented group $G$ is given by the quotient $F/N$ where $F$ is free on the group generators $X$ and $N$ is normally generated in $F$ by the relators $R \subset F$. The action of $F$ by conjugation on $N$ induces an action of $G$ on the abelianisation $N^{ab}$ of $N$. This makes $N^{ab}$ into a $G$-module called the relation module of the presentation. Evidently, the $G$-module $N^{ab}$ can be generated by $|R|$ elements and so the $G$-rank of $N^{ab}$, written $d_G(N^{ab})$, satisfies $d_G(N^{ab}) \leq d_F(N)$, where $d_F(N)$ is the minimum number of normal generators required for $N$. 

A presentation is said to have a relation gap if  $d_G(N^{ab}) \neq d_F(N)$ and the relation gap problem asks, if there exists a finitely presented group with a relation gap. As with the D2 problem, very little is known about the relation gap problem and most proposed counterexamples are not torsion-free, see \cite{Har}.

We give a proof to the following. 

\begin{theorem} \label{D2relgap}A finitely presented group $G$ with the D2 property does not have a relation gap for presentations realizing $\delta(G)$. 
\end{theorem}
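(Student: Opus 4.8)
The plan is to connect a relation gap with the failure of a D2 complex to be homotopy equivalent to a 2-complex, by building a D2 complex whose chain complex realizes the relation module but whose "size" is forced to be that of $d_G(N^{ab})$ rather than $d_F(N)$. Start with a presentation $\langle X \mid R\rangle$ of $G$ realizing $\delta(G)$, so $|X|-|R|=\delta(G)$ and (writing $F$ free on $X$, $N$ its normal closure of $R$) we have $d_F(N)=|R|$; indeed any presentation realizing the deficiency must use a minimal normal generating set for $N$, for otherwise we could delete a relator and improve the deficiency. Suppose for contradiction that this presentation has a relation gap, i.e. $k:=d_G(N^{ab}) < |R|$. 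The relation module fits into the exact sequence $0 \to N^{ab} \to (\mathbb Z G)^{|X|} \to \mathbb Z G \to \mathbb Z \to 0$ coming from (\ref{chain}), and since $N^{ab}$ is generated by $k$ elements as a $G$-module we obtain a surjection $(\mathbb Z G)^{k} \twoheadrightarrow N^{ab}$, hence a partial free resolution $(\mathbb Z G)^{k} \to (\mathbb Z G)^{|X|} \to \mathbb Z G \to \mathbb Z \to 0$.

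Next I would geometrically realize this algebraic resolution. The standard technique (as in Johnson's work on the D2 problem) is: take the presentation 2-complex $Y$ of $\langle X\mid R\rangle$, and attach $3$-cells to kill the difference, producing a finite $3$-complex $W$ with $\pi_1(W)=G$ whose cellular chain complex of the universal cover, in degrees $\le 2$, is chain homotopy equivalent to the shortened resolution above; more precisely one wants a finite $3$-dimensional complex $W$ with $\pi_1 W = G$, with $H_3(\widetilde W)=0$ and $H_2(\widetilde W)$ stably free of the "expected" rank so that $W$ is in fact a D2 complex — here one uses that the discrepancy $|R|-k$ can be absorbed into free summands. Concretely, since $(\mathbb Z G)^k \to (\mathbb Z G)^{|X|}$ has the same image $N^{ab}$ as $(\mathbb Z G)^{|R|}\to(\mathbb Z G)^{|X|}$, Schanuel's lemma gives $\ker \oplus (\mathbb Z G)^{k} \cong (\mathbb Z G)^{|R|}$ on the level of the second syzygy, and this isomorphism is what lets one build a finite D2 complex $X_0$ homotopy equivalent to $Y$ after a wedge with spheres — wait, this is exactly the wrong direction, so instead one builds $X_0$ directly from the length-one-shorter resolution and checks $cd(X_0)=2$, i.e. $H^3(X_0;M)=0$ for all coefficient modules $M$, which follows because the truncated resolution computes the cohomology and has nothing in degree $3$.

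Now invoke the D2 property: $X_0$ is homotopy equivalent to a finite $2$-complex $Z$. A finite $2$-complex with $\pi_1=G$ is (up to simple homotopy subtleties) a presentation complex, and the chain complex of $\widetilde Z$ in degree $\le 2$ reads $(\mathbb Z G)^{r} \to (\mathbb Z G)^{s} \to \mathbb Z G \to \mathbb Z \to 0$ for some presentation $\langle s \text{ gens} \mid r \text{ rels}\rangle$ of $G$; since $Z \simeq X_0$, the Euler characteristics agree, $s-r = |X|-k$, and chain-homotopy equivalence of the $2$-truncations forces the relation module of this new presentation to again be $N^{ab}$, now normally generated by $r$ elements. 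Pushing the count: because homotopy equivalence preserves the chain complex up to stabilization by free modules, one gets $r \le k$ after cancelling free summands, hence a presentation of $G$ whose relator set normally generates a subgroup with the same deficiency bound but with fewer relators than $|R|$ — contradicting minimality of $|R|=d_F(N)$, unless $k=|R|$. The main obstacle, and the step I would spend the most care on, is the passage from the stable/chain-level statement to an honest presentation with $\le k$ relators: one must control the free stabilizations (the $(\mathbb Z G)^k$ vs $(\mathbb Z G)^{|R|}$ ambiguity) and ensure the finite $2$-complex produced by the D2 property can be taken to be an actual presentation complex realizing the smaller count, which is where Johnson-style arguments about $D2$ complexes and the "realization" of algebraic $2$-complexes are essential.
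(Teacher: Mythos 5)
Your overall strategy is the same as the paper's: use the relation gap to shorten the partial free resolution coming from a deficiency-realizing presentation, realize the shortened resolution geometrically as a finite D2 complex, apply the D2 property to replace it by a finite $2$-complex, and derive a contradiction by counting cells. The paper packages this via Swan's invariant $\mu_2(G)$ (Proposition \ref{mu2}: the D2 property forces $\mu_2(G)=1-\delta(G)$), but the mechanism is identical. Two points, however, need attention. First, the realization step you describe as ``the standard technique'' is the actual content of the argument and cannot be left as a sketch: what is needed is Wall's Theorem 4 (Theorem \ref{CTCWall} in the paper), whose explicit cell counts $\alpha_2(Y_0-K^2)=\alpha_2(A)+\alpha_1(K)+\alpha_0(A)$ and $\alpha_3(Y_0-K^2)=\alpha_2(K)+\alpha_1(A)+\alpha_0(K)$ are precisely what guarantee that the resulting finite $3$-dimensional D2 complex has Euler characteristic equal to $\mu_2$ of the shortened resolution. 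Merely ``attaching $3$-cells to kill the difference'' does not by itself control $\chi$, and without that control the endgame collapses.

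Second, your endgame as written has a genuine gap: the claim that ``one gets $r\le k$ after cancelling free summands'' is not valid over $\mathbb Z G$ in general (stably free modules need not be free, and free summands need not cancel), and the attempt to produce an honest presentation with at most $k$ relators so as to contradict $d_F(N)=|R|$ is essentially the relation gap problem run in reverse --- you would be assuming that the $G$-rank of the relation module controls $d_F(N)$, which is exactly what is in question. Fortunately this whole step is unnecessary, and you already have the correct contradiction in hand: from $\chi(Z)=\chi(X_0)$ you computed $s-r=|X|-k$, and since $k<|R|$ this gives a presentation of $G$ (read off from the finite $2$-complex $Z$ after collapsing a spanning tree) of deficiency $|X|-k>|X|-|R|=\delta(G)$, contradicting the maximality of $\delta(G)$. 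This is precisely how the paper concludes, via $\delta(G)-1\ge -\chi(L)=-\mu_2(G)$. If you replace your final paragraph by this Euler characteristic comparison and cite Wall's theorem for the realization step, the proof is complete.
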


This may be known to topological group theorists but we have not found it in the literature. There is a result of Dyer \cite[Theorem 3.5 ]{dyer} with the same statement but with the additional hypothesis $H^3(G, \mathbb Z G)=0$. 

We need the following.
\begin{proposition}[\cite{ji} Proposition 4.3, or \cite{IH}, Remark 1.3] \label{mu2} Let $G$ be a finitely presented group with the D2 property. Then $\mu_2(G)=1-\delta(G)$.
\end{proposition}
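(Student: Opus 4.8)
The inequality $\mu_2(G)\le 1-\delta(G)$ is already recorded above, coming from the cellular chain complex (\ref{chain}) of a deficiency-realizing presentation, so the whole content of the statement is the reverse inequality $\mu_2(G)\ge 1-\delta(G)$, and this is where the D2 property must enter. The plan is to start from a partial free resolution $\mathcal F$ attaining the minimum $\mu_2(\mathcal F)=\mu_2(G)$, realize it geometrically by a finite D2 complex of the same Euler characteristic, invoke the D2 property to replace that complex by a finite $2$-complex, and finally read off a presentation of $G$ whose deficiency is at least $1-\mu_2(G)$.

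First I would normalise $\mathcal F$. Using elementary (Tietze-type) operations on the chain complex, cancelling a free generator of $(\mathbb Z G)^{f_0}$ against a generator of $(\mathbb Z G)^{f_1}$ whenever the augmentation permits, one reduces to the case $f_0=1$ with standard augmentation, leaving $\mu_2(\mathcal F)=f_2-f_1+1$ unchanged and preserving exactness. Writing $J=\ker\partial_1=\operatorname{im}\partial_2$ for the second syzygy, Schanuel's lemma identifies $J$, up to addition of free modules, with the second syzygy $J_0$ arising from any finite presentation $2$-complex $K$ of $G$; this stable identification is what lets one pass between the abstract data of $\mathcal F$ and an honest finite geometric scaffold.

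The key step is geometric realisation. Beginning with $K$, I would wedge on finitely many copies of $S^2$ and attach finitely many $2$-cells so that the resulting finite $2$-complex $K'$ has cellular chain complex chain homotopy equivalent to $\mathcal F$ except for a free direct summand of $\pi_2$; the Schanuel identification of syzygies, together with the fact that module homomorphisms into the relation module are induced by attaching maps (Fox derivatives of the attaching words), is what lets these moves match $\partial_1$ and $\partial_2$ up to chain homotopy. Killing the leftover free $\pi_2$-summand by attaching the corresponding $3$-cells produces a finite, possibly $3$-dimensional, complex $X$ with $\pi_1(X)=G$ whose chain complex is chain homotopy equivalent to the length-$2$ complex $\mathcal F$. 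In particular $X$ has cohomological dimension $2$, so $X$ is a finite D2 complex, and since the Euler characteristic is the alternating sum of free ranks and is a chain-homotopy invariant over $\mathbb Z G$, we get $\chi(X)=\mu_2(\mathcal F)=\mu_2(G)$.

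Now the hypothesis does its work: since $G$ has the D2 property, $X$ is homotopy equivalent to a finite $2$-complex $Y$, necessarily with $\pi_1(Y)=G$ and $\chi(Y)=\chi(X)=\mu_2(G)$. Collapsing a spanning tree of the $1$-skeleton exhibits $Y$ as the presentation complex of a presentation of $G$ of deficiency $1-\chi(Y)=1-\mu_2(G)$, so $\delta(G)\ge 1-\mu_2(G)$, that is $\mu_2(G)\ge 1-\delta(G)$; together with the inequality recorded above this yields $\mu_2(G)=1-\delta(G)$. I expect the realisation step to be the main obstacle: one must verify that the differentials of the abstract resolution $\mathcal F$ can be matched, up to chain homotopy, by genuine attaching maps, and that the resulting discrepancy in $\pi_2$ is exactly a free summand killable by $3$-cells while the complex stays of cohomological dimension $2$. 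Finiteness itself is not an obstacle, since throughout one performs only finitely many cell attachments on the finite complex $K$; the delicate part is purely the chain-level bookkeeping that certifies $X$ to be a D2 complex of the correct Euler characteristic.
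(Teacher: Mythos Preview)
Your proposal is correct and follows the same overall strategy as the paper: realise a length-$2$ partial free resolution achieving $\mu_2(G)$ by a finite D2 complex of Euler characteristic $\mu_2(G)$, invoke the D2 property to replace it by a finite $2$-complex, and read off $\delta(G)\ge 1-\mu_2(G)$.

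The one substantive difference is in the geometric realisation step. The paper treats this as a black box: it extends $\mathcal F$ to a full free resolution $A_*$, takes any classifying space $X$ for $G$, and then quotes Wall's Theorem~\ref{CTCWall} (applied with an arbitrary finite presentation complex $K^2$) to produce a finite $3$-dimensional D2 complex $Y_0$ with $\chi(Y_0)=\sum_{i=0}^2(-1)^i\alpha_i(A_*)=\mu_2(G)$. You instead sketch the construction by hand, via Schanuel's lemma and explicit cell attachments; what you are outlining is precisely (a special case of) the content of Wall's theorem. Your honest caveat that ``the realisation step is the main obstacle'' is well placed: the bookkeeping needed to ensure the final complex is genuinely D2 with the right Euler characteristic is exactly what Wall's theorem packages up. Citing it, as the paper does, is both cleaner and avoids having to re-verify those details; conversely, your approach has the virtue of making visible which part of the argument carries the real weight.
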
	

For completeness we give a proof of Proposition \ref{mu2} following \cite{IH}, based on the following theorem of Wall.  

\begin{theorem}[\cite{wall}, Theorem 4] \label{CTCWall}
	Let $X$ be a connected CW-complex, $G= \pi_1(X)$ and let $A_*$ be a positive free chain complex equivalent to the cellular chain complex $C^c_*(X)$ of the universal cover of $X$. Let $K^2$ be a connected CW-complex with fundamental group $G$. There exists another CW complex $Y$ and a homotopy equivalence $h: Y \rightarrow X$ such that $Y$ is obtained from $K^2$ by adding $2$-cells and $3$-cells at the base point to obtain a D2 complex $Y_0$ and then further cells such that $C_*^c(Y,Y_0)$ is the part of $A_*$ in dimension $\geq 3$.
	
	If the symbol $\alpha_i$ denotes the number of $i$-cells or of generators in degree $i$ then
	
	\[\alpha_2(Y_0-K^2)=\alpha_2(A)+ \alpha_1(K)+ \alpha_0(A),\]
	\[ \alpha_3(Y^0-K^2)=\alpha_2(K)+ \alpha_1(A)+ \alpha_0(K).\]
\end{theorem}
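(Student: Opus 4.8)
The plan is to realize the chain-level comparison between $K^2$ and $X$ by a controlled sequence of cell attachments, keeping track of dimensions throughout. Write $K=K^2$. First I would choose a cellular map $g\colon K\to X$ inducing the identity on $\pi_1=G$; such a map exists by standard obstruction theory, since each $2$-cell of $K$ corresponds to a relation that is already trivial in $\pi_1(X)=G$ and so its image loop bounds in $X$. Lifting $g$ to a $G$-equivariant map of universal covers and composing the induced chain map with the given equivalence $C^c_*(X)\simeq A_*$, I obtain a chain map $g_\#\colon C^c_*(K)\to A_*$. Because $g$ is an isomorphism on $\pi_1$, the map $g_\#$ is a homology isomorphism in degrees $0$ and $1$.

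The organising object is the algebraic mapping cone $\operatorname{Cone}(g_\#)$, whose term in degree $n$ is $A_n\oplus C^c_{n-1}(K)$ and whose homology records the relative homotopy groups that must be killed to upgrade $g$ to a homotopy equivalence. By the previous remark this cone is acyclic in degrees $\le 1$. The key realization step is geometric: working in the stable range where all new cells have dimension $\ge 2$ and $\pi_1$ is already correct, the relative Hurewicz theorem lets me attach, for each free $\mathbb{Z}G$-generator of the cone in degree $n$, an $n$-cell whose attaching sphere realizes the prescribed $\mathbb{Z}G$-boundary. Extending $g$ over these cells produces $h\colon Y\to X$, which is a homotopy equivalence by Whitehead's theorem since it induces isomorphisms on $\pi_1$ and on the homology of the universal covers.

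The dimension bookkeeping, which is the actual content of the cell-count formulas, comes from handling the acyclic bottom of the cone. Since $\operatorname{Cone}(g_\#)$ carries free modules in degrees $0$ and $1$ but is acyclic there, and since I may only attach cells of dimension $\ge 2$, I would replace the cone by a chain-homotopy-equivalent free complex concentrated in degrees $\ge 2$. This is done by adjoining elementary contractible complexes $\mathbb{Z}G\xrightarrow{=}\mathbb{Z}G$, which fold the degree-$0$ term up into degree $2$ and the degree-$1$ term up into degree $3$; geometrically each such elementary complex is a homotopy-trivial cancelling pair of cells, so the homotopy type of $Y$ is unaffected. The attached cells of dimensions $2$ and $3$ then constitute $Y_0$: it is a $D2$ complex because its cellular chain complex is the degree-$\le 2$ part of $A_*$ together with contractible summands, hence has cohomological dimension $2$; the remaining cells, of dimension $\ge 3$, realize $A_{\ge 3}$ and give $C^c_*(Y,Y_0)=A_{\ge 3}$. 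Reading off ranks, the degree-$2$ cells of $Y_0$ beyond $K$ number $\alpha_2(A)+\alpha_1(K)$, coming from $A_2\oplus C^c_1(K)$, plus the folded $\alpha_0(A)$; the degree-$3$ cells of $Y_0$ number $\alpha_2(K)$, the $C^c_2(K)$-summand of the cone in degree $3$, plus the folded $\alpha_1(A)+\alpha_0(K)$. These are exactly the two stated formulas, the $\alpha_3(A)$ summand of the cone being deferred to the further cells realizing $A_{\ge 3}$.

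I expect the main obstacle to be the geometric realization in the lowest dimensions. One must verify that every prescribed $\mathbb{Z}G$-module boundary is genuinely induced by an attaching map of a sphere, so that the algebraically constructed cone is honestly a cellular chain complex, and that the folding of the acyclic bottom is effected purely through homotopically trivial cell pairs, so that $h$ remains a homotopy equivalence while $Y_0$ stays two-dimensional in the cohomological sense. A useful check on the bookkeeping is that the alternating sum $\alpha_0(K)-\alpha_1(K)+\alpha_2(Y_0)-\alpha_3(Y_0)$, with $\alpha_i(Y_0)$ the total cell counts, collapses to $\alpha_0(A)-\alpha_1(A)+\alpha_2(A)$, matching the Euler characteristic of the degree-$\le 2$ truncation of $A_*$, as it must for a $D2$ complex.
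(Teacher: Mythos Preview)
The paper does not contain a proof of this theorem: it is quoted verbatim from Wall's paper \cite{wall} and used only as a black box in the proof of Proposition~\ref{mu2}. There is therefore nothing in the paper to compare your proposal against.

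That said, your sketch is a reasonable outline of Wall's own argument. The mapping-cone construction for $g_\#\colon C^c_*(K)\to A_*$ is exactly the organising device Wall uses, and the geometric realization step---attaching cells along maps prescribed by the relative Hurewicz theorem once $\pi_1$ is fixed---is the standard mechanism. Your identification of the cell counts from the ranks of the cone $A_n\oplus C^c_{n-1}(K)$ together with the stabilising summands is correct and gives the stated formulas.

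The one place where your language is loose is the ``folding'' of the acyclic bottom. Saying that the degree-$0$ term is folded into degree $2$ and the degree-$1$ term into degree $3$ via elementary complexes $\mathbb{Z}G\xrightarrow{=}\mathbb{Z}G$ is not quite how the mechanism works: one does not literally transport a module two degrees up by a single elementary move. What actually happens is that, since the cone is acyclic in degrees $0$ and $1$, the boundary $C_1\to C_0$ is a split surjection (as $C_0$ is free), so after adding an elementary pair in degrees $(2,1)$ of rank $\alpha_0(A)$ one can change basis to split off a contractible subcomplex supported in degrees $\{0,1\}$; the remaining degree-$1$ piece, of rank $\alpha_1(A)+\alpha_0(K)$, is then handled by the analogous stabilisation in degrees $(3,2)$. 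The net bookkeeping you state is right, but a reader will want to see this two-step splitting spelled out rather than a single ``fold''. Your Euler-characteristic check at the end is a good sanity test and is indeed how the formulas are used in Proposition~\ref{mu2}.
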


\begin{proof}[Proof of Proposition \ref{mu2}]
Let \[ (\mathbb Z G)^{f_2} \rightarrow (\mathbb Z G)^{f_1} \rightarrow (\mathbb Z G)^{f_0} \rightarrow \mathbb Z \rightarrow 0\]
be a partial free resolution of $\mathbb Z$ with $f_2-f_1+f_0=\mu_2(G)$. Extend this to a free resolution $A_*$ and let $X$ be a CW complex which is a classifying space for $G$. Now $A_*$ is homotopy equivalent to the cellular complex $C^c_*(X)$ of $\tilde X$ and therefore starting with any finite presentation complex $K^2$ for $G$ we can apply Theorem \ref{CTCWall} above. In particular there exists a finite 3-dimensional $D2$ complex $Y_0$ with $\pi_1(Y_0)=G$ and we compute
 \[ \chi(Y_0)=\sum_{i=0}^3 (-1)^{i}\alpha_i(Y_0)= \sum_{i=0}^2 (-1)^i\alpha_i(A_*)=\mu_2(G). \] 
 
We are assuming that the $D2$ Problem has positive solution for $G$, therefore $Y_0$ is homotopy equivalent to a finite $2$-dimensional complex $L$. We have $G=\pi_1(K)=\pi_1(L)$ and $\chi(L)=\chi(Y_0)= \mu_2(G)$. Hence \[ \delta(G) -1 \geq \alpha_1(L)-\alpha_0(L)-\alpha_2(L)= -\chi (L)=-\mu_2(G). \] Therefore $1-\delta(G) \leq \mu_2(G)$. Since the opposite inequality $\mu_2(G) \leq 1-\delta(G)$ always holds we have equality.
\end{proof} 

\begin{proof}[Proof of Theorem \ref{D2relgap}]
Let $G$ be a group with the D2 property. Take a presentation $\langle X \ | \ R \rangle$ for $G$ with $e_1$ generators and $e_2$ relations such that $e_1-e_2= \delta(G)$. We have $G \cong F/N$ where $F$ is a free group of rank $e_1$ on $X$ and $N$ is the normal closure of the relations $R$. Since $e_1-e_2$ realises the deficiency of $G$ it follows that $e_2=d_F(N)$. Let $M=N^{ab}$ be the relation module of this presentation.  Recall the chain complex (\ref{chain}) above. We have $M \cong \ker \partial _1= \mathrm{im} \partial_2$. If $M$ has relation gap then $u:=d_G(M)<e_2$ and in particular there is a surjection of $\mathbb ZG$ modules
	$f:(\mathbb Z G)^u \rightarrow \ker \partial_1$.
	Therefore we can amend the partial resolution above to
	\[ (\mathbb Z G)^{u} \stackrel{f}{\longrightarrow} (\mathbb Z G)^{e_1} \stackrel{\partial_1}{\longrightarrow} \mathbb Z G \stackrel{\partial_0}{\longrightarrow} \mathbb Z \rightarrow 0.
	\]
	
	This gives $\mu_2(G) \leq 1+u-e_2 < 1- \delta(G)$ contradicting Proposition \ref{mu2}. Therefore presentations of $G$ which realize $\delta(G)$ have no relation gap.

\end{proof}

\section{Relation Gap problem v.s. 2D Conjecture}

\begin{theorem} \label{rel2D}
If $G$ is a counterexample to the 2D conjecture then there exists a finite index subgroup $H$ of $G$ such that $H$ has a presentation with relation gap. 
\end{theorem}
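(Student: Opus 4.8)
The plan is to use the deficiency hypothesis to push a presentation of $G$ down into finite-index subgroups until a relation gap is forced. Suppose $G$ is a counterexample. I would first fix a finite presentation $\langle X\mid R\rangle$ realising $\delta(G)$ and, by adjoining finitely many pairs consisting of a new generator together with that generator as a relator (which changes neither $G$ nor $\delta(G)$), arrange that $|X|$ is much larger than $d(G)$. Write $G=F/N$ and let $M=N^{ab}$ be the relation module. For each finite-index subgroup $H$, Schreier rewriting yields a presentation of $H$ with $[G:H](|X|-1)+1$ generators and $[G:H]|R|$ relators; since the deficiency hypothesis turns the inequality $\delta(H)-1\ge [G:H](\delta(G)-1)$ from the Background into an equality, this Schreier presentation realises $\delta(H)$, and its relation module is exactly the restriction of $M$ to $\Z H$. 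Thus $H$ has a relation gap in its Schreier presentation precisely when the $\Z H$-module $M$ can be generated by fewer than $[G:H]|R|$ elements.

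The second ingredient is the observation — a short manipulation of length-$2$ partial free resolutions, in the spirit of Swan's work and of the proof of Proposition \ref{mu2} — that for a presentation of \emph{any} finitely presented group $\Gamma$ on sufficiently many generators $X$ the relation module is generated by exactly $|X|-1+\mu_2(\Gamma)$ elements (as a $\Z\Gamma$-module). Applying this to $G$ (legitimate since $|X|\gg d(G)$) gives $d_G(M)=|X|-1+\mu_2(G)$, and applying it to $H$ with the Schreier generating set (whose size $[G:H](|X|-1)+1$ grows with the index) gives $d_H(M)=\bigl([G:H](|X|-1)+1\bigr)-1+\mu_2(H)=[G:H](|X|-1)+\mu_2(H)$. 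On the other hand $[G:H]|R|=[G:H](|X|-\delta(G))=\bigl([G:H](|X|-1)+1\bigr)-\delta(H)$. Comparing the two, $H$ has a relation gap in its Schreier presentation if and only if $\mu_2(H)<1-\delta(H)$. Some care is needed here to control the cancellation of $\Z H$-modules underlying the generator count and to make the phrase ``sufficiently many generators'' uniform as $H$ runs through a residual chain, but this is routine bookkeeping.

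It then remains to produce a finite-index $H$ with $\mu_2(H)<1-\delta(H)$; equivalently, one must show that if $\mu_2(H)=1-\delta(H)$ for every finite-index $H\le G$ then $G$ admits a finite $2$-dimensional $K(G,1)$, contradicting the choice of $G$. Under that assumption the deficiency hypothesis forces $\mu_2$ to be exactly multiplicative, $\mu_2(H)=[G:H]\mu_2(G)$. Feeding this, together with the Morse inequalities (Proposition \ref{morse}) applied in each member of a residual chain $\{H_n\}$, into Lück's approximation theorem should pin down the $L^2$-Betti numbers, the target being the sharp identity $\delta(G)-1=\beta_1(G)-\beta_2(G)$, which by Lemma \ref{criterion} produces the forbidden classifying space. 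I expect this last step — upgrading the multiplicativity of $\mu_2$, via approximation, to the $L^2$-Euler identity of Lemma \ref{criterion} (equivalently, to the vanishing of the $L^2$-dimension of the second homotopy module of an efficient presentation $2$-complex of $G$) — to be the main obstacle, and the place where the homological input and the $L^2$-methods must be combined most carefully.
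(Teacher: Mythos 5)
Your reduction of the theorem to finding a finite-index subgroup $H$ with $\mu_2(H)<1-\delta(H)$ is broadly reasonable (the stabilisation formula $d_\Gamma(N^{ab})=|X|-1+\mu_2(\Gamma)$ for presentations with enough trivial generators can be extracted from Schanuel's lemma plus the fact that $k\mapsto d_\Gamma(M\oplus(\Z\Gamma)^k)-k$ is non-increasing and bounded below, although the uniformity of ``sufficiently many generators'' over a residual chain is not the routine bookkeeping you claim). The genuine gap is the final step, the one you yourself flag as the main obstacle: it cannot be closed by the method you propose, because all the available inequalities point the same way. From $\mu_2(H)=1-\delta(H)=-[G:H](\delta(G)-1)$ and Proposition \ref{morse} you obtain $b_0(H)-b_1(H)+b_2(H)\le -[G:H](\delta(G)-1)$; dividing by the index and applying L\"uck approximation along a residual chain yields only $\delta(G)-1\le\beta_1(G)-\beta_2(G)$, which is precisely the inequality of Lemma \ref{criterion} that holds for \emph{every} infinite finitely presented group. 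To invoke the equality clause of Lemma \ref{criterion} you would need the reverse inequality, i.e.\ an upper bound on $b_1(H)-b_2(H)$ in terms of $\delta(H)$, and nothing in your setup provides one: the possible discrepancy is exactly the $L^2$-dimension of $\pi_2$ of an efficient presentation complex, which is the quantity the theorem is secretly about. So the proposal reduces the theorem to a statement at least as hard as the theorem itself, and then does not prove it.

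The paper's proof avoids $\mu_2$ and $L^2$-invariants entirely and is elementary. Take a presentation $2$-complex $X$ realising $\delta(G)$; if it is not aspherical, pick a nonzero $\rho\in\pi_2(X)=H_2(\tilde X)=\ker\partial_2\subseteq(\Z G)^{e_2}$ and divide by the greatest common divisor of its integer coefficients, which keeps it in $\ker\partial_2$ since $\partial_2$ is a map of torsion-free abelian groups. Residual finiteness gives a finite-index normal subgroup $H$ such that the (finite) support of $\rho$ injects into $G/H$; with respect to the resulting $\Z H$-basis of $(\Z G)^{e_2}\cong(\Z H)^{e_2[G:H]}$ the element $\rho$ has coprime integer coordinates, hence is primitive, so the relation module of the Schreier presentation of $H$ --- a quotient of $(\Z H)^{e_2[G:H]}$ by a submodule containing $\rho$ --- needs fewer than $e_2[G:H]$ generators, while the deficiency equality forces $d_F(N)=e_2[G:H]$. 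That single observation, which your argument is missing, produces the relation gap directly.
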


\begin{proof}
Suppose that $G$ is a finitely presented group; assume that $X$ is a presentation 2-complex for $G$ realising the deficiency $\delta(G)$. If $X$ is not aspherical, then by Whitehead's Theorem, $H_2(\tilde{X}) \neq 0$. Let $e_i$ denote the number of $i$-cells in $X$. So $\delta(G)-1= e_1-e_2-1$. We have the exact sequence of $G$-modules 

\[\mathcal{F}:\ 0\longrightarrow H_2(\tilde{X})  \longrightarrow \mathbb{Z}G^{e_2} \stackrel{\partial_2}{\longrightarrow} \mathbb{Z}G^{e_1} \stackrel{\partial_1}{\longrightarrow} \mathbb{Z}G \longrightarrow \mathbb{Z} \longrightarrow 0\]

\noindent where $H_2(\tilde X)= \ker \partial_2$. The relation module $R$ associated to $X$ is isomorphic to $\ker \partial_1= \mathrm{im} \partial_2 \cong \mathbb{Z}G^{e_2}/H_2(\tilde{X})$. Take a non-zero element $\rho$ of $H_2(\tilde{X})$. As an element of $\mathbb{Z}G^{e_2}$, $\rho$ has a representation as a non-zero tuple $(a_1,\ldots, a_{e_2})$, where each $a_i$ is a linear combination in $\mathbb{Z}G$ with support $C_i$ as follows: 
\[a_i=\sum_{g \in C_i}a^i_g g\]

Let $C=\cup_i C_i$; this is a finite collection of elements of $G$. There exists a finite index normal subgroup of $G$, say $H$ such that the elements of $C$ project to distinct cosets in $G/H$. The natural structure of $\mathbb{Z}G$ as a $\mathbb{Z}H$-module makes $\mathcal{F}$ into the chain complex for the action of $H$ on $\tilde{X}$. Let $E$ be a collection of coset representatives for $H$ in $G$ such that $C\subseteq E$. Consider 
\[\mathbb{Z}G^{e_2} = \left( \bigoplus_{g \in E} \mathbb{Z}H.g \right)^{e_2} \cong \mathbb{Z}H^{e_2[G:H]}\] 

Let $d$ be the greatest common divisor of the integers $\{a^i_g\ |\ g \in C_i,\ i=1,2,\ldots, e_2\}$. Then $\rho=d\rho'$, where $\rho' \in \mathbb{Z}G^{e_2}$ and all its coefficients are co-prime. As $\rho$ is an element of $\ker \partial_2$ and $\partial_2$ is a homomorphism of torsion-free abelian groups, we deduce that $\rho'$ is also an element of $\ker \partial_2$. Therefore, we can assume that $d=1$. 

Consider the presentation for $H$ arising from the action of $H$ on $\tilde{X}$: this presentation has $(e_1-1)[G:H]+1$ generators and $e_2[G:H]$ relations.
The relation module $R'$ for this presentation of $H$ is the restriction $R\downarrow ^G _H$ of the relation module $R$, wherein $\rho$ represents the zero element. We have assumed that the coefficients of $\rho$ are co-prime and so $\rho$ is a primitive element in the abelian group $(\mathbb{Z} E)^{e_2}$ containing its support in $\mathbb{Z}G^{e_2} \cong \mathbb{Z}H^{e_2[G:H]}$. Consequently $R'\cong\mathbb{Z}H^{e_2[G:H]}/H_2(\tilde{X})$ can be generated by fewer than $e_2[G:H]$ elements as an $H$-module. 
 If the above presentation of $H$ has no relation gap then it needs strictly fewer than $e_2[G:H]$ relations and hence $\delta(H)-1 > [G:H](e_1-e_2-1)= [G:H](\delta(G)-1)$, contradiction.
 
 Therefore if $X$ is not aspherical some finite index subgroup of $G$ has a relation gap.
\end{proof} 

We note that the argument above gives the following general criterion for freeness of $\mathbb ZG$-modules.

\begin{proposition}\label{free}
	Let $G$ be a residually finite group and let $M$ be a finitely generated $\mathbb Z G$-module. Assume that $M$ is torsion free as an abelian group and let $f: (\mathbb ZG)^r \rightarrow M$ be a surjective homomorphism of $\mathbb Z G$ modules. Then $f$ is an isomorphism if and only of $d_H(M)=r [G:H]$ for each subgroup $H$ of finite index in $G$.
	
	In particular $M$ is a free module if and only if $d_H(M)=[G:H]d_G(M)$ for each subgroup $H$ of finite index in $G$.
\end{proposition}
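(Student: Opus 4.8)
The plan is to extract the argument already used in the proof of Theorem~\ref{rel2D} and run it in this abstract setting. The forward implication is easy: if $f$ is an isomorphism then $M\cong(\mathbb ZG)^r$, and decomposing $\mathbb ZG$ along a set of right coset representatives of a finite index subgroup $H$ shows that the restriction of $M$ to $H$ is free of rank $r[G:H]$ over $\mathbb ZH$. So I only need the fact that $(\mathbb ZH)^n$ genuinely requires $n$ generators as a $\mathbb ZH$-module, which follows by applying $-\otimes_{\mathbb ZH}\mathbb Z$ (trivial coefficients): this carries a $\mathbb ZH$-generating set of $(\mathbb ZH)^n$ onto a generating set of the abelian group $\mathbb Z^n$, and $\mathbb Z^n$ plainly needs $n$ generators. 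The same computation handles the forward half of the final assertion.

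For the converse I would argue by contradiction. Suppose $K=\ker f\neq 0$ and pick $0\neq\rho=(a_1,\dots,a_r)\in K$, with $a_i=\sum_{g\in C_i}a^i_g g$ of finite support $C_i$. Dividing by the gcd of the coefficients $a^i_g$ and using that $M$ is torsion free as an abelian group (so $d\,f(\rho')=0$ forces $f(\rho')=0$), I may assume these coefficients are globally coprime. Put $C=\bigcup_iC_i$; by residual finiteness choose a finite index subgroup $H$ of $G$ so that the elements of $C$ lie in distinct cosets of $H$, and extend $C$ to a set $E$ of right coset representatives. Decomposing $\mathbb ZG=\bigoplus_{e\in E}\mathbb ZH\,e$ identifies $(\mathbb ZG)^r$ with a free $\mathbb ZH$-module of rank $r[G:H]$; the choice of $H$ forces $\rho$ into the integral sublattice $(\mathbb ZE)^r$, where its coordinates are exactly the $a^i_g$, so $\rho$ is a primitive vector of the free abelian group $(\mathbb ZE)^r$ of rank $r[G:H]$.

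Now I would extend $\rho$ to a $\mathbb Z$-basis of $(\mathbb ZE)^r$; the transition matrix lies in $GL_{r[G:H]}(\mathbb Z)\subseteq GL_{r[G:H]}(\mathbb ZH)$, so the resulting basis is simultaneously a free $\mathbb ZH$-module basis of $(\mathbb ZG)^r$. Quotienting out the rank one free summand $\mathbb ZH\,\rho$, which lies inside the restriction of $K$ to $H$, exhibits the restriction of $M$ to $H$ as a quotient of $(\mathbb ZH)^{r[G:H]-1}$; hence $d_H(M)\leq r[G:H]-1<r[G:H]$, contradicting the hypothesis, so $f$ must be an isomorphism. The final ``in particular'' then follows by applying this with $r=d_G(M)$ and any surjection $(\mathbb ZG)^{d_G(M)}\to M$.

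The one step that needs a moment's thought is the passage from ``$\rho$ is primitive over $\mathbb Z$'' to ``$\rho$ is part of a free $\mathbb ZH$-basis of $(\mathbb ZG)^r$'', but this rests only on the elementary inclusion $GL_n(\mathbb Z)\subseteq GL_n(\mathbb ZH)$; beyond that the proof is pure bookkeeping with coset representatives and the augmentation map, and I expect no real obstacle, since essentially all the ingredients are already present in the proof of Theorem~\ref{rel2D}.
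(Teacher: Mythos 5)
Your proof is correct and follows essentially the same route as the paper, whose own proof simply picks a kernel element with coprime integer coefficients, separates its support into distinct cosets of a finite index subgroup $H$ via residual finiteness, and invokes the primitivity argument from the proof of Theorem~\ref{rel2D} to get $d_H(M)<r[G:H]$. Your write-up actually makes explicit the step the paper leaves implicit (extending the primitive vector to a $\mathbb Z$-basis and using $GL_n(\mathbb Z)\subseteq GL_n(\mathbb ZH)$ to get a free $\mathbb ZH$-basis), which is the intended justification.
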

\begin{proof}
If $f$ is not injective we can find an element $\rho=(a_1, \ldots, a_r) \in \ker f$ with support $C= \cup_{i=1}^r C_i$ and coefficients $a^i_g \in \mathbb Z$ defined by
$a_i = \sum_{g \in C_i} a^i_g g$. Since $M$ is torsion free we can assume that the greatest common divisor of all integers $a^i_g$ is 1. There is a finite index subgroup $H$ of $G$ such that $C$ projects injectively into $G/H$ and arguing in the same way as in the proof of Theorem \ref{rel2D} we deduce $d_H(M)< r[G:H]$, contradiction. Therefore $f$ is a bijection and $M$ is a free module.   
\end{proof}

\section{The 2D conjecture for pro-$p$ groups.} \label{p}

In this section $G$ denotes a finitely presented pro-$p$ group, where we consider presentations in the category of pro-$p$ groups. We keep the notation $\delta(G)$ for the maximum of $|X|-|R|$ over all pro-$p$ presentations $\langle X , R\rangle$ of $G$.

Below we prove the analogue of the $2D$ conjecture for $G$:

\begin{theorem} \label{pro-p}
Let $G$ be a finitely presented pro-$p$ group. The following are equivalent:

(i) $\delta(G)-1= [G:H](\delta(H)-1)$ for every open subgroup $H$ of $G$.

(ii) $cd_p(G) \leq 2$.
\end{theorem}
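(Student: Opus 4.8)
My plan is to prove the two implications separately, exploiting the fact that the pro-$p$ setting is rigid enough to make Euler-characteristic bookkeeping exact. For the direction (ii) $\Rightarrow$ (i): if $\operatorname{cd}_p(G) \le 2$ then the relevant chain complex from any minimal pro-$p$ presentation gives a length-$2$ free resolution of $\mathbb{Z}_p$ over $\mathbb{Z}_p[[G]]$, and the minimal number of generators and relations of a pro-$p$ group are computed by the continuous cohomology groups $H^1(G,\mathbb{F}_p)$ and $H^2(G,\mathbb{F}_p)$ (this is the pro-$p$ analogue of the fact used via $\mu_2$ earlier). Concretely, for a pro-$p$ group one has $\delta(G) = \dim H^1(G,\mathbb{F}_p) - \dim H^2(G,\mathbb{F}_p)$ exactly when $\operatorname{cd}_p(G)\le 2$, since in that case the minimal presentation has no ``wasted'' relations — the relation module is free. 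Then one applies this formula to $G$ and to $H$, and uses that an open subgroup $H$ of a pro-$p$ group $G$ with $\operatorname{cd}_p(G)\le 2$ also has $\operatorname{cd}_p(H)\le 2$, together with the multiplicativity of the Euler characteristic $\chi = 1 - \delta$ in finite-index pro-$p$ extensions (which follows from the Schreier-type formula, i.e. restricting a free $\mathbb{Z}_p[[G]]$-resolution to $\mathbb{Z}_p[[H]]$ multiplies all the ranks by $[G:H]$). This gives $\delta(H) - 1 = [G:H](\delta(G)-1)$ for every open $H$, which is condition (i).

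For the harder direction (i) $\Rightarrow$ (ii): suppose $\operatorname{cd}_p(G) \ge 3$ (note $\operatorname{cd}_p(G)\le 1$ forces $G$ free pro-$p$ by the pro-$p$ analogue of Stallings–Swan, which is consistent with (i), so the real dichotomy is $\le 2$ versus $\ge 3$). Take a minimal pro-$p$ presentation of $G$, giving a partial free resolution $(\mathbb{Z}_p[[G]])^{e_2} \xrightarrow{\partial_2} (\mathbb{Z}_p[[G]])^{e_1} \xrightarrow{\partial_1} \mathbb{Z}_p[[G]] \to \mathbb{Z}_p \to 0$ with $e_1 = \dim H^1(G,\mathbb{F}_p)$, $e_2 = \dim H^2(G,\mathbb{F}_p)$, and the relation module $M = \ker \partial_1$. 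Since $\operatorname{cd}_p(G) \ge 3$, this $M$ is not a projective $\mathbb{Z}_p[[G]]$-module (a projective relation module would let one splice a length-$2$ resolution), and since $\mathbb{Z}_p[[G]]$ is local, $M$ is not free. The key tool is now the pro-$p$ analogue of Proposition \ref{free}: a finitely generated $\mathbb{Z}_p[[G]]$-module $M$ is free if and only if $d_H(M) = [G:H]\, d_G(M)$ for every open subgroup $H$. Granting that, non-freeness of $M$ produces an open $H$ with $d_H(M) < [G:H] d_G(M) = [G:H] e_2$; restricting the presentation to $H$ yields a pro-$p$ presentation of $H$ with $(e_1-1)[G:H]+1$ generators and strictly fewer than $e_2[G:H]$ relations, whence $\delta(H) - 1 > [G:H](\delta(G)-1)$, contradicting (i).

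The main obstacle is establishing the pro-$p$ version of Proposition \ref{free} — that is, detecting freeness (equivalently, injectivity of a surjection from a free module) of a finitely generated $\mathbb{Z}_p[[G]]$-module by its minimal generator count on all open subgroups. In the abstract case the proof of Proposition \ref{free} used a residual-finiteness argument: pick $\rho$ in the kernel with coprime integer coefficients and a finite collection $C$ of group elements in its support, then separate $C$ in a finite quotient $G/H$ so that $\rho$ becomes primitive in $\mathbb{Z}H^{r[G:H]}$. In the pro-$p$ setting, $\mathbb{Z}_p$ is no longer a PID with the same separation properties and supports are ``continuous'', so I would instead argue with the topology: reduce mod $p$ (since $\mathbb{Z}_p[[G]]$-freeness is detected mod $p$ by Nakayama over the local ring $\mathbb{Z}_p[[G]]$, and a surjection of f.g. modules with $M$ having no $p$-torsion that stays surjective mod $p$ and is bijective mod $p$ is bijective), work over $\mathbb{F}_p[[G]] = \varprojlim \mathbb{F}_p[G/U]$, and use that a nonzero element of the kernel of $(\mathbb{F}_p[[G]])^r \to \bar M$ already survives to $(\mathbb{F}_p[G/U])^r$ for some open normal $U$, with $\bar M / U$ generated by fewer than $r[G:U]$ elements over $\mathbb{F}_p[G/U]$; then Nakayama lifts this defect back to $d_H(M) < r[G:H]$ for $H = U$. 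Making the passage to the finite quotient preserve the generator defect — the exact pro-$p$ analogue of the primitivity step — is the delicate point, and the pro-$p$ analogue of Strebel's / Lubotzky's result (Proposition 4.2 of \cite{L}) referenced in the excerpt is the model to follow.
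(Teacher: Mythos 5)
Your outline is correct, and for the harder implication it takes a genuinely different (though parallel) route from the paper. The paper also starts from the minimal presentation's partial free resolution over $\mathbb F_p[[G]]$ with $e_i=\dim H^i(G,\mathbb F_p)$ and also hinges on the same key step you identify --- a nonzero element of $J=\ker d_2$ survives in $(\mathbb F_p[G/N])^{e_2}$ for some open normal $N$ --- but it then avoids the relation module entirely: it applies $\mathrm{Hom}_N(-,\mathbb F_p)$ to the truncated complex augmented by $\bar J^*$ and reads off $\delta(N)-1=\sum_{i=0}^2(-1)^{i+1}\dim H^i(N)=(e_1-e_2-1)[G:N]+\dim\bar J^*$ from the Euler characteristic of that finite-dimensional complex, contradicting (i) directly. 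You instead transplant the abstract machinery (Theorem \ref{rel2D} plus a pro-$p$ Proposition \ref{free}), passing through $d_H(M)$ for the relation module and the no-relation-gap property of pro-$p$ presentations. Your route works, and the ``delicate point'' you flag is in fact \emph{easier} than in the abstract case: since $\mathbb F_p[[H]]$ is local with residue field $\mathbb F_p$, Nakayama gives $d_H(\bar M)=\dim_{\mathbb F_p}H_0(H,\bar M)=r[G:H]-\dim \bar K$ where $\bar K$ is the image of the kernel in $(\mathbb F_p[G/H])^r$, so no primitivity or gcd argument is needed --- one only needs $\bar K\neq 0$, which is exactly the separation step. The paper's dualization buys a shorter proof with no appeal to relation gaps or Nakayama; yours makes the analogy with the abstract Theorem \ref{rel2D} transparent. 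Two small corrections: the identity $\delta(G)=\dim H^1(G,\mathbb F_p)-\dim H^2(G,\mathbb F_p)$ holds for \emph{every} finitely presented pro-$p$ group (Serre, I.4.2--4.3), not only when $cd_p(G)\le 2$ --- and you implicitly need this general form in the (i) $\Rightarrow$ (ii) direction to know the minimal presentation realizes $\delta(G)$; and if you work over $\mathbb Z_p[[G]]$ you must justify that reduction mod $p$ interacts well with $\ker\partial_1$, which is why the paper works over $\mathbb F_p[[G]]$ from the outset.
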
  

It will be interesting to find a characterizetion of the finitely presented profinite groups $G$ for which  the condition \emph{(i)} above holds. Note that already the 1-dimensional situation for profinite groups is quite different. See \cite{LD} for examples of profinite groups which satisfy Schreier's rank-index formula for all open subgroups, but are not projective.

\begin{proof}
	For pro-$p$ groups $\delta(G)= \dim_{\mathbb F_p} H^1(G)- \dim_{\mathbb F_p} H^2(G)$ where we write $H^i(G)=H^i(G,\mathbb F_p)$, see \cite[ I.4.2 \& I.4.3]{serre}. Hence, if $cd_p(G) \leq 2$ then $\delta(G)-1= -\chi(G)$, the pro-$p$ Euler characteristic of $G$ and therefore (1) holds.
	
	Conversely, suppose that (1) holds and let $e_i= \dim_{\mathbb F_p} H^i(G)$ for $i=1,2$. We have the partial free resolution
		\[ \mathbb F_p[[G]]^{e_2} \stackrel{d_2}{\longrightarrow} \mathbb F_p[[G]]^{e_1} \stackrel{d_1}{\longrightarrow} \mathbb F_p[[G]] \longrightarrow \mathbb F_p \longrightarrow 0, \]
	arising from the presentation of $G$ with $e_1$ generators and $e_2$ relations. We claim that $J:= \ker d_2$ must be zero. Suppose not. Then we can find an open normal subgroup $N$ of $G$ such that the image $\bar J$ of $J$ under the reduction $(\mathbb F_p[[G]])^{e_2} \rightarrow (\mathbb F_p[G/N])^{e_2}$ is non-zero.
	
	Note that the free $\mathbb F_p[[G]]$ resolution above is also a partial free resolution of $\mathbb F_p[[N]]$ modules.
	We apply the functor $\mathrm{Hom}_{N}(-, \mathbb F_p)$ to the above resolution, using $\mathrm{Hom}_N (\mathbb F_p G, \mathbb F_p)\simeq (\mathbb F_p[G/N])^*$, where by $V^*$ we denote the dual of the vector space $V$ over $\mathbb F_p$. We obtain the chain complex
	
	\[   0 \leftarrow \bar{J}^* \stackrel{d'_3}{\longleftarrow} (\mathbb F_p[G/N]^*)^{e_2} \stackrel{d'_2}{\longleftarrow} (\mathbb F_p[G/N]^*)^{e_1} \stackrel{d'_1}{\longleftarrow} \mathbb F_p[G/N]^* \leftarrow 0.   \]
	which is exact at $\bar J^*$ and whose homology group in degree $i$ is $H^i(N)$ 
	Therefore \[ \delta(N)-1=\sum_{i=0}^2 (-1)^{i+1} \dim H^i(N) =\] \[= (e_1-e_2-1)[G:N] + \dim \bar J^* > [G:N](\delta(G)-1),\]
	since $\bar J^* \not =\{0\}$, a contradiction to (i). Therefore $J=\{0\}$ and $cd_p(G)\leq 2$.
\end{proof}

\section{Higher dimensional analogues}

Deficiency can be viewed as one of the partial Euler characteristics, which are defined as follows:

Let $n \geq 2$ be an integer and let $G$ be a group of type $F_n$. Define $\nu_n(G)$ to be the minimum of $(-1)^n\chi(X)$ where $X$ is a finite CW complex of dimension $n$ such that $\pi_1(X)=G$ and $\pi_i(X)=\{0\}$ for $i=2,3, \ldots, n-1$ (i.e its universal cover $\tilde X$ is $(n-1)$-connected. 
Note that $\nu_2(G)= 1-\delta(G)$ and for completeness we define $\nu_1(G)=d(G)-1$. From the definition of $\nu_n$ and $\mu_n$ we have $\nu_n(G) \geq \mu_n(G)$ for all $n$. We note that Theorem \ref{CTCWall} above implies 

\begin{proposition}\label{equal} $\nu_n(G)=\mu_n(G)$ when $n \geq 3$.
\end{proposition}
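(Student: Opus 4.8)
The plan is to establish the nontrivial inequality $\nu_n(G) \le \mu_n(G)$ for $n \ge 3$, the reverse inequality $\nu_n(G) \ge \mu_n(G)$ having already been noted. I would start from a partial free resolution $\mathcal F$ of $\mathbb Z$ of length $n$ realising $\mu_n(G)$, with $A_i := (\mathbb Z G)^{f_i}$ for $0 \le i \le n$, and extend it to a full free resolution $A_*$ of $\mathbb Z$ over $\mathbb Z G$ (the modules $A_i$ with $i > n$ need not be finitely generated, which does no harm since only cells up to dimension $n$ will be used). Since $G$ is of type $F_n$ it is finitely presented, so I can fix a finite presentation $2$-complex $K = K^2$ for $G$ and take $X = K(G,1)$; the chain complex $C^c_*(\tilde X)$ is a free resolution of $\mathbb Z$, hence chain homotopy equivalent to $A_*$, and so Theorem \ref{CTCWall} applies. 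It produces a complex $Y \simeq X$ built from $K$ by first attaching finitely many $2$- and $3$-cells to form a finite $3$-dimensional D2 complex $Y_0$ (finite because the cell counts in the theorem involve only $f_0, f_1, f_2$ and the cell numbers of $K$) and then attaching further cells with $C^c_*(Y, Y_0)$ equal to $A_*$ in degrees $\ge 3$.

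Next I would pass to $Z := Y^{(n)}$, the $n$-skeleton of $Y$. Because $n \ge 3$ and $Y_0$ is $3$-dimensional, $Z$ is obtained from $Y_0$ by attaching the relative cells of $C^c_*(Y, Y_0)$ in dimensions $3, \dots, n$ only --- that is, $f_j$ cells in each dimension $j$ with $3 \le j \le n$ --- so $Z$ is a finite $n$-dimensional complex. Its fundamental group is $\pi_1(Y) = G$, and, lifting the CW structure, its universal cover is the $n$-skeleton $(\tilde Y)^{(n)}$ of $\tilde Y$; since $Y \simeq K(G,1)$ the space $\tilde Y$ is contractible, and the skeletal inclusion $(\tilde Y)^{(n)} \hookrightarrow \tilde Y$ is an isomorphism on $\pi_i$ for $i < n$, so $\tilde Z$ is $(n-1)$-connected. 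Hence $Z$ is an admissible complex in the definition of $\nu_n(G)$.

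Finally I would compute $(-1)^n \chi(Z)$. Feeding the two cell-count identities of Theorem \ref{CTCWall} into $\chi(Y_0) = \alpha_0(Y_0) - \alpha_1(Y_0) + \alpha_2(Y_0) - \alpha_3(Y_0)$, the contributions of the cells of $K$ cancel and one is left with $\chi(Y_0) = f_0 - f_1 + f_2$; adding the alternating count of the relative cells in degrees $3, \dots, n$ gives $\chi(Z) = \sum_{i=0}^n (-1)^i f_i$, so $(-1)^n \chi(Z) = \sum_{i=0}^n (-1)^{n-i} f_i = \mu_n(\mathcal F) = \mu_n(G)$. Therefore $\nu_n(G) \le (-1)^n \chi(Z) = \mu_n(G)$, which with the reverse inequality yields $\nu_n(G) = \mu_n(G)$.

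The step I expect to need the most care is the truncation: one must check that the $3$-dimensional D2 complex $Y_0$ handed back by Wall's theorem sits inside the $n$-skeleton that we retain, and this is precisely where $n \ge 3$ is used --- for $n = 2$ the $2$-skeleton of $Y$ drops the $3$-cells of $Y_0$ and the Euler-characteristic bookkeeping collapses, which is exactly why the case $n = 2$ (i.e. $\nu_2 = \mu_2$) genuinely requires the D2 property as in Proposition \ref{mu2}. The remaining check, that truncating the contractible $\tilde Y$ at level $n$ leaves an $(n-1)$-connected space, is the standard fact that a skeletal inclusion induces isomorphisms on homotopy groups below the skeleton dimension.
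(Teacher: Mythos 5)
Your proof is correct and follows exactly the route the paper intends: the paper gives no written argument beyond the remark that Theorem \ref{CTCWall} implies the statement, and your construction --- truncating Wall's complex $Y$ at the $n$-skeleton, which contains the $3$-dimensional $Y_0$ precisely because $n\ge 3$, and computing $(-1)^n\chi(Z)=\mu_n(\mathcal{F})=\mu_n(G)$ by the same cancellation as in the proof of Proposition \ref{mu2} --- is the natural way to carry that remark out. The Euler-characteristic bookkeeping and the $(n-1)$-connectedness of the truncated universal cover both check out.
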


Here we prove the higher dimensional analogue of the 2D conjecture.
\begin{theorem}\label{>2}
	Let $n>2$ be an integer and let $G$ be a residually finite group of type $F_n$. Then $G$ has finite classifying space of dimension $n$ if and only if 
	$\nu_n(H)=\nu_n(G)[G:H]$ for every subgroup $H$ of finite index in $G$. 
\end{theorem}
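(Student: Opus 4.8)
The plan is to mimic the strategy used for the $n=2$ case (Theorem~\ref{rel2D}) but now working with exact resolutions of $\mathbb ZG$ directly, using Proposition~\ref{equal} to replace $\nu_n$ by $\mu_n$ throughout. One direction is immediate: if $G$ has a finite $n$-dimensional $K(G,1)$, call it $X$, then $\chi(X)=(-1)^n\nu_n(G)$ and for any finite index $H\le G$ the cover $X_H$ is a finite $n$-dimensional $K(H,1)$ with $\chi(X_H)=[G:H]\chi(X)$, so $\nu_n(H)\le (-1)^n\chi(X_H)=[G:H]\nu_n(G)$; since we always have $\nu_n(H)\ge\mu_n(H)\ge \sum_{i=0}^n(-1)^{n-i}b_i(H)$ and in fact $\mu_n(H)=[G:H]\mu_n(G)$ is forced by the chain complex of $X_H$ being optimal (an $n$-dimensional $K(H,1)$ already realizes $\mu_n$ by Proposition~\ref{equal}), equality holds. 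So assume (i): $\nu_n(H)=[G:H]\nu_n(G)$ for all finite index $H$, and we must produce a finite $n$-dimensional $K(G,1)$.

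Take a finite $n$-dimensional complex $X$ with $\pi_1(X)=G$ and $\tilde X$ $(n-1)$-connected, realizing $\nu_n(G)=(-1)^n\chi(X)$. If $X$ is not aspherical then $H_n(\tilde X)=\pi_n(\tilde X)\neq 0$, and the cellular chain complex of $\tilde X$ gives an exact sequence of $\mathbb ZG$-modules
\[
0\longrightarrow H_n(\tilde X)\longrightarrow (\mathbb ZG)^{f_n}\stackrel{\partial_n}{\longrightarrow}(\mathbb ZG)^{f_{n-1}}\longrightarrow\cdots\longrightarrow(\mathbb ZG)^{f_0}\longrightarrow\mathbb Z\longrightarrow 0,
\]
with $\sum(-1)^{n-i}f_i=\mu_n(G)=\nu_n(G)$ and $H_n(\tilde X)=\ker\partial_n$ a nonzero submodule of $(\mathbb ZG)^{f_n}$. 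Since $G$ is residually finite, by exactly the argument of Theorem~\ref{rel2D} — pick $0\neq\rho\in\ker\partial_n$, divide out the gcd of its $\mathbb Z$-coefficients so $\rho$ is primitive, choose a finite index normal $H\trianglelefteq G$ so the support of $\rho$ injects into $G/H$, and view the resolution as one of $\mathbb ZH$-modules — the restricted module $H_n(\tilde X){\downarrow}^G_H$ has a primitive element killed, so $(\mathbb ZH)^{f_n[G:H]}/H_n(\tilde X)$ is generated by fewer than $f_n[G:H]$ elements. Splicing a surjection $(\mathbb ZH)^{f_n[G:H]-1}\twoheadrightarrow \mathrm{im}\,\partial_n$ back onto the restricted resolution of $\mathbb ZH$ gives a partial free resolution of length $n$ of strictly smaller $\mu_n$-value, so $\mu_n(H)<\mu_n(G)[G:H]$. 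By Proposition~\ref{equal} this reads $\nu_n(H)<\nu_n(G)[G:H]$, contradicting (i). Hence $X$ is aspherical and is the required finite $n$-dimensional $K(G,1)$.

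The main obstacle is a bookkeeping point rather than a conceptual one: making sure the restricted resolution is genuinely a \emph{partial free resolution of length $n$} in the sense of (\ref{freeres}) after the surgery, i.e. that $\ker\partial_1$ (or more precisely the image we splice onto) is a legitimate $\mathbb ZH$-submodule and that no exactness further down is disturbed. This is handled by the observation, already used in the proof of Theorem~\ref{rel2D} and codified in Proposition~\ref{free}, that the chain complex of $\tilde X$ regarded over $\mathbb ZH$ is precisely the chain complex of the $H$-cover, so all the lower modules remain free of the correct rank $f_i[G:H]$ and all the lower differentials are unchanged; only the top of the complex is modified. A secondary subtlety is verifying that an $n$-dimensional $K(H,1)$, when it exists, really does realize $\mu_n(H)$ with no slack (needed for the easy direction and for reading the contradiction cleanly) — this is exactly the content of Proposition~\ref{equal} together with the Morse inequalities (Proposition~\ref{morse}), since for an aspherical $X_H$ one has $\mu_n(H)=(-1)^n\chi(X_H)=[G:H](-1)^n\chi(X)$.
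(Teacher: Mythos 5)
Your proof is correct, and the two directions share their skeleton with the paper's argument: the easy direction is the Morse-inequality sandwich, and the hard direction locates, via the primitive-element/residual-finiteness argument of Theorem \ref{rel2D} (codified as Proposition \ref{free}), a finite index subgroup $H$ with $d_H(\mathrm{im}\,\partial_n)<f_n[G:H]$. Where you genuinely diverge is in the last step, converting that generator count into a bound on $\nu_n(H)$: you splice the smaller surjection onto the restricted resolution to get $\mu_n(H)<[G:H]\nu_n(G)$ and then invoke Proposition \ref{equal} to pass from $\mu_n$ back to $\nu_n$. The paper explicitly flags this route as available (``we could use Proposition \ref{equal}'') but deliberately avoids it: instead it realizes the $u=d_H(M)$ generators as attaching maps via the Hurewicz theorem, glues $u$ new $n$-cells onto the $(n-1)$-skeleton of the cover $Y$ of $X$ corresponding to $H$, and checks directly that the resulting complex $Z$ has $(n-1)$-connected universal cover, so that $\nu_n(H)\leq(-1)^n\chi(Z)<[G:H]\nu_n(G)$ with no detour through $\mu_n$. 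The trade-off is clear: your version is shorter and purely algebraic once Proposition \ref{equal} is granted, but that proposition is stated in the paper without proof and rests on Wall's Theorem \ref{CTCWall} --- precisely the nontrivial inequality $\nu_n\leq\mu_n$ that your argument needs for $H$; the paper's cell-attachment construction is in effect a self-contained, hands-on proof of that inequality in the one case required, keeping the theorem independent of Wall's machinery. One small blemish: in your easy direction the claim that $\mu_n(H)=[G:H]\mu_n(G)$ ``is forced'' is both unnecessary and circularly justified as written; the chain $\nu_n(H)\geq\sum_{i=0}^n(-1)^{n-i}b_i(H)=(-1)^n\chi(X_H)\geq\nu_n(H)$ already closes the argument, exactly as in the paper.
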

\begin{proof}
	Suppose that $X$ is an $n$-dimensional $K(G,1)$ complex for $G$, then $\nu_n(G) \leq (-1)^n\chi(X)$ from the definition of $\nu_n(G)$. On the other hand the Morse inequalities give $\nu_n(G) \geq \sum_{i=0}^n (-1)^{n-i}b_i(G)= (-1)^n \chi(X)$.
	Therefore $\nu_n(G)=(-1)^n \chi(X)$ and in the same way $\nu_n(H)=(-1)^n \chi(X')$, where $X'$ is the cover of $X$ corresponding to $H$. Since $\chi(X')=[G:H] \chi(X)$ the equality follows.
	
	For the other direction we could use Proposition \ref{equal}. Instead we take a more elementary approach and argue directly using Proposition \ref{free}. 
	
	Suppose that $\nu_n(H)=\nu_n(G)[G:H]$ for every subgroup $H$ of finite index in $G$.
	Let $X$ be the $n$-dimensional CW complex which realises $\nu_n(G)$. Let $e_i$ be the number of $i$-dimensional cells of $X$ and let 
	\[ \quad F_n \stackrel{\partial_n}{\longrightarrow} F_{n-1} \stackrel{\partial_{n-1}}{\longrightarrow} \cdots \stackrel{\partial_1}{\longrightarrow} F_0 \longrightarrow \mathbb{Z}\longrightarrow 0\] 
	with $F_i= (\mathbb Z G)^{e_i}$ be the chain complex of the universal cover $\tilde X$. By the Hurewicz theorem $\pi_n(X) \simeq H_n(X)= \ker \partial _n$ and thus $X$ is aspherical if and only if $\partial_n$ is injective.
	
	Suppose $\ker \partial_n \not = \{0\}$ and consider $M= \ker \partial_{n-1} = \mathrm{im} \partial_n$. We apply Proposition \ref{free} to the $\mathbb Z G$- homomorphism $\partial_n: F_n \rightarrow M$, where $F_n=(\mathbb ZG)^{e_n}$ to deduce that $u:=d_H(M)< e_n[G:H]$ for some subgroup $H$.
	
	Choose a set of generators $\alpha_1, \ldots ,\alpha_u$ of the $\mathbb{Z}H$-module $M$. Let $Y$ be the cover of $X$ with degree $[Y:X]=[G:H]$ and $\pi_1(Y)=H$. Let $p: \tilde X \rightarrow Y$ be the universal covering map. Denote by $Y^{n-1}$ and $\tilde X^{n-1}$ the $(n-1)$-skeleta of $Y$ and $\tilde X$ respectively and observe that $\pi_{n-1}(Y^{n-1}) \simeq H_{n-1}(\tilde X^{n-1}) = \ker \partial_{n-1}=M$ by the Hurewicz theorem.
	Therefore for each $i=1, \ldots, u$ we can find a cellular map $j_i: S^{n-1} \rightarrow \tilde X^{n-1}$ representing $\alpha_i$.
	This means that $H_{n-1}(j_i)$ sends the generator of $H_{n-1}(S^{n-1})$ to the element $\alpha_i \in H_{n-1}(\tilde X^{n-1})=M$.
	
	We now attach $n$-dimensional cells $\sigma^n_i$ to $Y^{n-1}$ for $i=1,\ldots u$ with boundary attaching maps \[ S^{n-1} \stackrel{j_i}{\longrightarrow} \tilde X^{n-1} \stackrel{p}{\longrightarrow} Y^{n-1} \]	
	and define $Z:= Y^{n-1} \cup_{i=1}^u \sigma^n_i$. Note that since $Y^{n-1}= Z^{n-1}$ we have $\pi_i(Z)=\pi_i(Y)$ for $i=1, \ldots, n-2$. We claim that $\pi_{n-1}(Z)=\{0\}$. It is sufficient to prove that $H_{n-1}(\tilde Z)=\{0\}$ for the universal cover $\tilde Z$ of $Z$. Since the $(n-1)$-skeleta of $Z$ and $X$ coincide, the boundary maps $\partial_{n-1}$ on the chain complex of $\tilde Z$ and $\tilde X$ are the same and hence $\ker \partial_{n-1}=M$.
		On the other hand the boundary map $\partial'_n: (\mathbb ZH)^u \rightarrow M$ of degree $n$ of the chain complex of $\tilde Z$  is surjective since by construction its image contains the generators $\alpha_i$.
	Therefore $H_{n-1}(\tilde Z)=\{0\}$ and so $\tilde Z$ is $(n-1)$-connected as claimed.
	
	Note that $Z$ has $[G:H]e_i$ cells in dimension $i$ for $i=0,1 \ldots, n-1$ and $u$ cells in dimension $n$.
	Since $u< e_n[G:H]$ it follows that \[ \nu_n(H) \leq (-1)^n \chi(Z) = u + \sum_{i=0}^{n-1} (-1)^{n-i}e_i[G:H] < \nu_n(G)[G:H],\] contradiction. Therefore $H_n(\tilde X)=\{0\}$ and $X$ is a finite $K(G,1)$-complex of dimension $n$.
\end{proof}

\end{document}